\newtheorem{theorem}{Theorem}
\newtheorem{lemma}[theorem]{Lemma}
\theoremstyle{definition}
\title{Who Is Guilty?}
\author{Benjamin Chen, Ezra Erives, Leon Fan,\\
Michael Gerovitch, Jonathan Hsu, Tanya Khovanova,\\
Neil Malur, Ashwin Padaki, Nastia Polina,\\
Will Sun, Jacob Tan, Andrew The}
\date{}
\begin{document}

\maketitle

\begin{abstract}
We discuss a generalization of logic puzzles in which truth-tellers and liars are allowed to deviate from their pattern in case of one particular question: ``Are you guilty?''
\end{abstract}

\section{Introduction by Tanya Khovanova}

The goal of the PRIMES program at MIT is to help gifted high-schoolers conduct research in mathematics. The program started in 2011 and is extremely successful \cite{EGK}. For example, PRIMES-2014 students won the first two places in the Siemens competition and the first three places in the area of basic research in the Intel Science Talent Search. When we started PRIMES many people doubted that high-schoolers could do high-level research in mathematics. We proved those people wrong. 

How early can students start research in mathematics? Can it be done in kindergarten? Probably not. But middle-schoolers can do interesting things in mathematics. Maybe not full-fledged research, but at least we can prepare them for research.

In 2015 we decided to start a program for middle-schoolers, which I would lead. The goal was to train them for math competitions, to teach them to think mathematically, and to prepare them for future research. This experimental program is called PRIMES STEP. We decided not to make research the main focus of the program because we didn't want the students to be disappointed if they did not succeed. We decided to run it as a math club, where research is a side project. This way we can expand the time we invest in research if the kids are interested and it goes well, or shrink it if it's not working.

I chose logic to start our experiments in research. There are many fun logic books and puzzles. In most of them people are divided into truth-tellers who always tell the truth and liars who always lie. One of the most famous books is by Raymond Smullyan \cite{Smullyan}, \textit{What is the Name of this Book?}, where truth-tellers are called knights and liars are called knaves. 

In reality you will never meet people who are so obsessive. I wanted to have a particular example where people do not tell the truth all the time. To simplify things, I decided that I would allow our truth-tellers to lie about only one question: ``Are you guilty?''

I didn't know where it would take us, but the journey was both unexpected and fun. I threw the problem at the students and they came up with a lot of ideas. I refrained from thinking about the project outside of class, leaving the students to lead the discoveries. 

We decided that we wanted to expand our study to four types of people: absolute truth-tellers, partial truth-tellers, absolute liars, and responsible liars. The kids formulated and proved some results. While doing this, they invented a number of fun logic puzzles. 

You can read about our discoveries in the following story, which we present in chronological order to show our thought processes.

\section{Truth-Tellers Island}

We are on an island---called the \textit{Truth-Tellers Island}---where everyone always tells the truth. That is, almost everyone and almost always. Some people here do indeed always tell the truth to the point of being obsessive. We call them the \textit{Absolute Truth-Tellers}. Some people tell the truth almost always. They lie only in one specific situation. If they are asked, ``Are you guilty?'' they lie only if they are indeed guilty. We call them the \textit{Partial Truth-Tellers}.

This is not the only question the partial truth-tellers lie about. They lie to any question that is equivalent to the above. For example, they would lie to questions such as, ``Did you commit the crime?'' or ``Did you steal the chocolate?'' In addition, if they volunteer information on the subject of them committing the crime, the same rule applies: they lie.

As we will discuss later, this situation is very non-trivial as they might be asked questions that are not equivalent to the question of guilt, but still imply that they might be guilty. But for the sake of our puzzles we assume that the islanders are not smart enough to analyze the implications and would tell the truth to any indirect question.

Here is one of the cases of the local island investigator, Detective Khovanova:

\textbf{Ashwin's Puzzle}

\begin{quote}

Detective Khovanova is investigating a theft of an Indian jewel. She knows that the perpetrators are one or more people from this list of five suspects: Ezra, Leon, Jacob, Andrew, and Will. She asks each of them if they committed the crime. These are their answers:

Ezra: ``I didn't commit the crime.'' \\
Leon: ``Ezra is correct. I committed the crime.''\\
Jacob: ``I am innocent. Will is as well.''\\
Andrew: ``I committed the crime with Leon.''\\
Will: ``I am innocent. Jacob is not.''

Who committed the crime? Can you identify the absolute truth-tellers?
\end{quote}

\textbf{Ashwin's Solution.} Ezra is not guilty because Leon confirms it. Leon is guilty because Andrew points to him. Jacob is guilty, because Will rats him out. Andrew is guilty by his own admission. Will is innocent by Jacob's statement. 

Leon and Andrew are absolute truth-tellers because they admitted their guilt. Jacob is not. We can't say if Ezra or Will are absolute truth-tellers or not.

We used the following observation to solve this puzzle:

\begin{lemma}\label{thm:1}
On Truth-Tellers Island if someone says they committed the crime, then they actually committed the crime. In addition, statements about other people are always true.
\end{lemma}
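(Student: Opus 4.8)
The plan is to analyze the two inhabitant types separately according to the statement being made, using the definition that governs when each type lies. The key observation is that the only situation in which anyone on the island deviates from the truth is when a \emph{guilty partial truth-teller} is answering a question equivalent to ``Are you guilty?'' Everything else is truthful by definition. So I would split into cases based on \emph{who} is speaking and \emph{what kind} of statement is being made.

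First I would prove the first sentence: if someone says they committed the crime, then they actually did. Suppose for contradiction that a person $P$ claims guilt but is in fact innocent. An absolute truth-teller never lies, so $P$ cannot be an absolute truth-teller making a false self-incriminating claim. A partial truth-teller lies only about the guilt question and only when actually guilty; since $P$ is innocent, the partial truth-teller tells the truth about the guilt question, so an innocent partial truth-teller would never claim guilt either. In both cases we reach a contradiction, so any self-admission of guilt must be true.

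Next I would handle the second sentence, that statements about other people are always true. The point here is that the single permitted deviation is narrowly scoped: partial truth-tellers lie only to the guilt question \emph{about themselves}. A statement concerning another suspect's innocence or guilt is not equivalent to the speaker's own ``Are you guilty?'' question, so it falls outside the exception and must be told truthfully by every islander, whether an absolute or a partial truth-teller.

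The main obstacle, and the place where I would be most careful, is pinning down precisely which statements count as ``equivalent to the guilt question'' versus which are merely about other people or indirect implications. The excerpt is explicit that partial truth-tellers lie to any question \emph{equivalent} to their own guilt but tell the truth to indirect questions, and that statements about other people are separate. I would lean on exactly this stipulation to justify that a claim about another person never triggers the lying exception. I expect no heavy computation; the entire argument is a short case analysis that reduces to invoking the definitions of the two inhabitant types, so the real work is stating the case split cleanly and confirming that the exception applies in no case except a guilty partial truth-teller's self-referential answer.
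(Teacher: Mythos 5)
Your proof is correct and matches the paper's reasoning: the paper states this lemma as an immediate observation from the definitions (with no written proof), and your case analysis---an innocent islander of either type never claims guilt, and the lying exception is scoped solely to the speaker's own guilt---is exactly the definitional justification the paper relies on.
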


In the following puzzle the crime can be solved because people say where and with whom they spent the day. Here we assume that if people were together, then they are either  guilty together or innocent together.

\textbf{Jacob's Puzzle}

\begin{quote}
Four people are all suspects of breaking into the president's Glorious Bank sometime during the day yesterday, but only two out of the four suspects actually broke into the president's Glorious Bank. The following statements were taken from each person:

Jonathan: ``I was guarding the palace all night yesterday. Ezra was guarding with me.''\\
Ashwin: ``I did not break in. I was with Ezra all day yesterday.''\\
Ezra: ``I did not break in. But Jonathan was acting strange all day yesterday.''\\
Andrew: ``I did not break in. I saw Jonathan guarding the palace all day long yesterday.''

Can Detective Khovanova figure out who broke into the president's Glorious Bank?
\end{quote}

\textbf{Jacob's Solution.} Jonathan's statement is misleading because it is about night not day. The information he provides is irrelevant. Ashwin was with Ezra. But Jonathan didn't break in as he was guarding. So it was Ashwin and Ezra.

Jacob's puzzle leads us to the following lemma, where we assume that if people committed the crime together, then they knew each other.

\begin{lemma}\label{thm:2}
On Truth-Tellers Island if a crime was committed by at least two people together who didn't hide each other's identities from each other, the detective can always solve the crime.
\end{lemma}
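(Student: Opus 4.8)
The plan is to reduce the detective's task to collecting testimony about \emph{other} people, where Lemma~\ref{thm:1} guarantees complete reliability. First I would fix notation: let $G$ denote the set of people who actually committed the crime, so that $|G| \ge 2$, and recall the standing assumption that any two members of $G$ know each other's identity. The detective's strategy is then simply to ask every suspect, for every other suspect, whether that other person is guilty---equivalently, to ask each person to name all of their accomplices.

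Next I would analyze the answers. By Lemma~\ref{thm:1}, every statement a person makes \emph{about someone else} is true, regardless of whether the speaker is an absolute or a partial truth-teller and regardless of the speaker's own guilt; a partial truth-teller only ever lies about their \emph{own} guilt. Hence every accusation the detective collects is correct, which gives soundness: if anyone names $X$ as guilty, then $X \in G$, so no innocent islander is ever incriminated.

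The heart of the argument is completeness---showing that \emph{every} member of $G$ is named by somebody. Here I would use the two hypotheses directly: fix any $g \in G$. Since $|G| \ge 2$, there is another guilty person $g' \in G$ with $g' \ne g$, and since the accomplices did not hide their identities from one another, $g'$ knows that $g$ committed the crime. When asked about $g$, the islander $g'$ must say so by Lemma~\ref{thm:1}. Thus each guilty person is accused by an accomplice, and combining this with soundness shows that the set of people accused by at least one other suspect is exactly $G$. The detective reads off $G$, and the crime is solved.

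I expect the main obstacle to be pinning down why both hypotheses are genuinely needed, so that the write-up makes clear the proof degrades gracefully without them. The ``at least two'' assumption is what supplies an external witness $g'$ for each guilty party; without it, a lone guilty partial truth-teller would simply deny guilt when asked ``Are you guilty?'', and no truthful statement about anyone else would reveal them. The ``did not hide identities'' assumption is what makes $g'$'s knowledge of $g$ actionable: an accomplice who never saw $g$'s face cannot name $g$ even while telling the truth. Finally, I would stress that guilt is established purely through accomplices' statements about others, so that the unreliable self-denials of guilty partial truth-tellers are never used and cannot create a gap.
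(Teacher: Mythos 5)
Your proposal is correct and follows essentially the same route as the paper: the detective asks each suspect about the guilt of every other suspect, and since (by Lemma~\ref{thm:1}) statements about other people are always true, each criminal is named by an accomplice. Your write-up simply makes explicit the soundness/completeness split and the role of each hypothesis, which the paper leaves implicit.
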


\begin{proof}
The detective asks each person about the guilt of all the other suspects. Since Truth-Teller islanders do not lie about other people, each criminal will be identified by his partners.
\end{proof}

The important part in the above proof is not that the criminals were together, but that every criminal's actions are known to someone else. The same method of solving crimes works for the following situation.

\begin{lemma}\label{thm:3}
On Truth-Tellers Island, if for every criminal there is at least one other person who knows that this criminal committed the crime, then the crime can be solved.
\end{lemma}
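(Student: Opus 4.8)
The plan is to reuse essentially the interrogation strategy of Lemma~\ref{thm:2}, now justified by the weaker hypothesis. The detective asks every suspect about the guilt of each of the other suspects, and then declares a person guilty exactly when at least one other person accuses them. I would then argue that this rule classifies everyone correctly, in two directions.

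First I would invoke Lemma~\ref{thm:1}: on Truth-Tellers Island, statements about other people are always true, since islanders lie only about their own guilt. Consequently, no one can accuse an innocent suspect---any accusation the detective hears is necessarily correct. This handles one direction: every person flagged as guilty by the rule really is guilty, and in particular no innocent suspect is ever wrongly accused, so everyone who receives no accusation is genuinely innocent.

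For the other direction, I would use the hypothesis directly. By assumption, for each criminal there is at least one other person who knows that this criminal is guilty. When the detective asks that person about the criminal in question, the answer is truthful (again by Lemma~\ref{thm:1}), so the detective hears at least one accusation against every criminal. Combining the two directions, the set of people accused by someone is exactly the set of criminals, and the crime is solved.

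The only point that needs care is that a suspect who does not know the status of some other person is not forced to accuse or exonerate---such a suspect may simply answer ``I don't know,'' which is itself a truthful statement about another person and therefore carries no false information. The argument never relies on everyone having complete knowledge; it rests only on the two facts that accusations are always truthful and that each criminal is known to someone. I expect this to be the main (though mild) subtlety: ensuring that partial ignorance among the suspects degrades gracefully to ``no accusation'' rather than to a false accusation, which is exactly what Lemma~\ref{thm:1} guarantees.
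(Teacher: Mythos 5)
Your proposal is correct and matches the paper's argument: the paper explicitly says Lemma~\ref{thm:3} is solved by the same method as Lemma~\ref{thm:2}---the detective asks everyone about everyone else, and since islanders never lie about others, each criminal is identified by the person who knows their guilt. Your added care about ``I don't know'' answers is a mild refinement of the same approach, not a different route.
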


Now Detective Khovanova takes on a case where no one other than the criminals themselves knows who committed the crime.

\textbf{Neil's Puzzle}

\begin{quote}
The mayor's favorite cuff-links were stolen and Detective Khovanova is trying to figure out who did it. She knows from her previous interrogations that the crime was committed by exactly one person out of four prime suspects. The suspects have never met or heard about each other. In response to the detective's questions, the suspects answer truthfully as much as they can to the extent of their knowledge. Detective Khovanova was able to figure out who did it by asking them the same question. What is the question?
\end{quote}

\textbf{Neil's Solution.} The question could be, ``Did I do it?'' The innocent person would answer: ``I do not know.'' The guilty person would answer, ``No.''

This is exactly the situation in which people do not admit their guilt directly, but it is deduced from their answers. 

We just covered the case in which one person committed the crime and no one else knows who the guilty party is. Earlier we covered the situation in which the guilt of every criminal was known by another person. 

What remains to discuss are those weird cases in which the criminals committed the crime together but did not know each other. This could happen, for example, if the criminals found each other online where they used pseudonyms and they robbed a bank together while wearing masks. What should Detective Khovanova do in this case?

The interesting lemma below shows that the detective can still solve the crime if she knows exactly how many people committed the crime. 

\begin{lemma}
Suppose on Truth-Tellers Island the crime was committed such that no one knows about anyone else who committed the crime. Even the criminals do not know any of their partners. However, if the number of people who committed the crime is public knowledge, then the detective can find out who committed the crime.
\end{lemma}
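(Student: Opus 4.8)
The plan is to exploit the one piece of global information the detective is handed---the publicly known count $k$ of criminals---together with the observation that each suspect can deduce how many guilty people there are \emph{among the others}, even though no individual knows who those others are.

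First I would record why the obvious approach fails, since this pinpoints where the hypothesis on $k$ must enter. If the detective simply asks everyone ``Are you guilty?'', then by Lemma~\ref{thm:1} the guilty absolute truth-tellers answer ``Yes'' and are unmasked at once; but every innocent suspect and every guilty partial truth-teller answers ``No.'' Because no one knows anything about anyone else, no corroborating testimony about a suspect is available from a third party, so the guilty partial truth-tellers are indistinguishable from the innocent. A direct question therefore cannot succeed, and the knowledge of $k$ is forced to do the work.

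The key step is to put to each suspect an \emph{indirect} question, namely: ``How many of the other suspects are guilty?'' Let $g_i\in\{0,1\}$ indicate whether suspect $i$ is guilty. Although suspect $i$ knows the identity of no fellow criminal, she does know her own status $g_i$ and the public total $k$, and so she can compute the number of guilty people other than herself as exactly $k-g_i$. Since this question is phrased about the others rather than being the guilt question or a rephrasing of it, it falls under the paper's convention that partial truth-tellers answer indirect questions truthfully---precisely the phenomenon already at work in Neil's puzzle, where the guilty suspect honestly answers ``Did I do it?''. Hence every suspect, whether an absolute or a partial truth-teller, reports the value $k-g_i$ honestly.

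Finally I would read off the verdict: a guilty suspect reports $k-1$ while an innocent one reports $k$, and these two values always differ, so the detective declares guilty exactly those who answer $k-1$, recovering the full set of criminals. I expect the main obstacle to lie not in this concluding count but in the modeling step of the previous paragraph: one must argue convincingly that the ``number of other guilty suspects'' question is genuinely answerable by each suspect from $k$ and $g_i$ alone (only the \emph{count} of others' guilt is needed, never their identities), and that it legitimately counts as an indirect question, so that a partial truth-teller answers it rather than treating it as the forbidden question and lying.
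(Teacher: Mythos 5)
Your proof is correct and follows essentially the same route as the paper: both exploit the publicly known count so that a question about the guilt of the \emph{others} is answerable from a suspect's own status alone (the paper asks the yes/no question ``Are there $m$ people other than you who could have committed the crime?'', while you ask for the count $k-g_i$ directly), and both rely on the paper's stated convention that partial truth-tellers answer such indirect questions honestly even when the answer implicates them. The only substantive difference is that your question is open-ended rather than yes/no---harmless on Truth-Tellers Island, though it is exactly the kind of question the paper later notes would not transfer by symmetry to Liars Island.
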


\begin{proof}
Suppose $m$ people committed the crime. Then the detective picks person X and gives X all possible lists of $m$ people not including X and asks if it is possible that any set from this list of people committed the crime. This is equivalent to asking, ``Are there $m$ people other than you who could have committed the crime.'' The guilty people know that they are one of the gang, so there could not be other $m$ people who committed the crime. So they would say `no.' The innocent people would say `yes.'
\end{proof}

Is there a situation in which the detective can't identify the guilty party? Is it possible that some crimes remain unsolved? The only case we have not yet discussed is when the criminals didn't know each other at all; only the criminals themselves knew that they committed the crime; and the total number of criminals is unknown. 

Just yesterday such a crime occurred. During a parade the vehicle transporting all the cash broke down and money flew everywhere. Some people picked up the money but no one knows who. In addition, no one knows how many people stole the money. Can this crime be solved? 

\begin{lemma}
The crime where no one knows how many people committed the crime and no one has any information about anyone else can be solved.
\end{lemma}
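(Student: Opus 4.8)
The plan is to mimic and generalize the argument used for the known-number case, replacing the family of $m$-element lists by the family of \emph{all} subsets of the other suspects, so that the detective never needs to know how many criminals there are. Since each person's only private information is their own guilt status, it suffices to extract that one bit from every suspect, and the trick will again be to hide the guilt question inside a question about the possible guilt of others.

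First I would fix a suspect $X$ and let the detective present to $X$, one at a time, every subset $S$ of the remaining suspects (all those other than $X$), asking for each the indirect question: ``Is it possible, as far as you know, that the people in $S$ are exactly the ones who committed the crime?'' As in the previous lemma, this is phrased entirely in terms of the possible guilt of other people, so by our standing assumption a partial truth-teller does not recognize it as the forbidden question ``Are you guilty?'' and therefore answers it truthfully.

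Next I would analyze the two cases. If $X$ is innocent, then $X$ knows only that $X$ did not do it and that the crime did occur; having no information about anyone else, $X$ cannot rule out any non-empty subset of the others being the true criminal set, so $X$ answers ``yes'' to at least one query. If $X$ is guilty, then $X$ knows that $X$ belongs to the criminal set; since every $S$ in the list excludes $X$, no such $S$ can equal the criminal set, so $X$ answers ``no'' to all of the queries. Hence $X$ is guilty precisely when $X$ answers ``no'' to every question in the list, and the detective reads off $X$'s status directly from the answers. Repeating this for each suspect identifies exactly the set of criminals. (One could compress the list into a single question such as ``could all the criminals be people other than you?'', but that phrasing sits closer to the direct guilt question, so enumerating subsets is the safer version.)

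The main obstacle is the same modeling point that underlies all of these puzzles: one must be confident that ``could this particular group of other people be the criminals?'' counts as an \emph{indirect} question rather than one equivalent to ``Are you guilty?'', so that guilty partial truth-tellers answer it honestly. Granting the paper's convention that the islanders do not analyze such implications, the argument goes through; the only remaining thing to verify is the clean dichotomy above, namely that an innocent suspect, lacking any knowledge of the others, genuinely cannot exclude at least one candidate set and so always produces a ``yes,'' separating such suspects unambiguously from the uniformly ``no''-answering guilty ones.
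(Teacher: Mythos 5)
Your proof is correct, but it takes a different route from the paper's. You generalize the machinery of the preceding lemma (the known-$m$ case): for each suspect $X$ you enumerate all subsets $S$ of the \emph{other} suspects and ask whether $S$ could be exactly the set of criminals; a guilty $X$ answers ``no'' to every query (the true criminal set contains $X$, and every $S$ excludes $X$), while an innocent $X$, having no information about anyone else, cannot rule out any non-empty $S$ and answers ``yes'' at least once. The paper instead uses a single question asked of everyone: ``Is it possible that everyone committed the crime?'' There the logic is dual to yours---the candidate set (``everyone'') always \emph{includes} the respondent, so an innocent person, knowing their own innocence, rules it out and says ``no,'' while a guilty person, knowing nothing about the others, cannot rule it out and says ``yes.'' The paper's version buys brevity: one constant-size question per suspect rather than your exponentially long list, and no need for the ``at least one yes'' bookkeeping. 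Your version buys a uniform template shared with the known-$m$ lemma and, as you note, keeps each individual question framed around a specific set of \emph{other} people, arguably staying further from the forbidden question ``Are you guilty?''---though under the paper's standing convention that islanders do not analyze implications, both phrasings are admissible, and indeed the paper's own Theorem~\ref{thm:tt} and Theorem~\ref{thm:l} use questions of exactly the kind you flagged as riskier.
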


\begin{proof}
The detective asks, ``Is it possible that everyone committed the crime?'' The innocent persons know of their innocence so they would say ``no.'' The guilty people would say ``yes.''
\end{proof}

We can combine all our lemmas into one big theorem:

\begin{theorem}\label{thm:tt}
On Truth-Tellers Island it is always possible to solve any crime.
\end{theorem}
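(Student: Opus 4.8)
The plan is to prove the theorem by a case analysis on the information structure of the crime, invoking the appropriate lemma in each case. The parameters that determine the difficulty of a crime are: (i) whether, for each guilty party, some \emph{other} person knows that they are guilty; (ii) whether the guilty parties know anything about one another; and (iii) whether the number of criminals is public knowledge. Throughout, the crucial feature of the island that makes every case tractable is that islanders lie only in response to the direct question ``Are you guilty?'' (and its equivalents) and, being unable to trace implications, answer every \emph{indirect} question truthfully. This is what lets the detective extract a guilty party's guilt without ever asking them the forbidden question directly.

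First I would dispose of the case in which every criminal's guilt is witnessed by at least one other person. This is exactly Lemma~\ref{thm:3}: the detective asks each suspect about every other suspect, and since islanders never lie about other people, every criminal is named by a witness. This subsumes the ``committed together'' scenario of Lemma~\ref{thm:2}, as well as the single-criminal scenario of Neil's puzzle once it is phrased as an indirect question.

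Next I would handle the residual cases, in which at least one criminal is known to no one but themselves, so the detective cannot rely on witnesses and must make a guilty party incriminate themselves indirectly. Here I split on whether the count of criminals is public. If the number $m$ is known, the fourth lemma applies: asking whether there exist $m$ people other than you who could have done it forces a guilty person to answer ``no'' (they know they are one of the gang) and an innocent person ``yes.'' If the number is unknown, the fifth lemma applies: the question ``Is it possible that everyone committed the crime?'' is answered ``yes'' by each guilty person and ``no'' by each innocent person, since each innocent person knows of their own innocence.

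The main obstacle is verifying that these cases are genuinely \emph{exhaustive}, and in particular handling the \emph{mixed} situation in which some criminals are witnessed by others while some are known only to themselves, or in which a guilty party happens to know that a specific other suspect is innocent. The latter is exactly where the universal-looking ``everyone committed the crime'' question can fail, since such a guilty person would truthfully answer ``no.'' The clean way around this is to apply the lemmas in sequence: first run the Lemma~\ref{thm:3} interrogation to identify and remove every criminal who is witnessed, then observe that the remaining suspects form a sub-crime with no external witnesses, to which the fourth or fifth lemma applies. Checking that this reduction is well-defined, and that removing the witnessed criminals does not corrupt the answers in the residual round, is the one step that requires care; the rest is bookkeeping.
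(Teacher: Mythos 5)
Your two-stage architecture --- a witness round via Lemma~\ref{thm:3}, then a self-incrimination round for the unwitnessed residue --- matches the paper's proof, but your stage two diverges from it, and the divergence is exactly where your gap lies. The paper does not case-split on whether the criminal count is public: after the witness round the detective asks each remaining suspect the single indirect question ``Did someone else commit the crime?'' (someone beyond those already identified). A hidden criminal knows of such a person --- themselves --- and, being unable to recognize this as an equivalent of ``Are you guilty?'', must truthfully answer ``yes''; an innocent remaining suspect knows of no unidentified criminal (any such knowledge would have surfaced in the witness round) and answers ``no'' or ``I don't know.'' Crucially, the validity of this question rests only on each person's knowledge of their \emph{own} guilt or innocence plus the absence of witnesses, which is precisely what the witness round guarantees.

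Your unknown-count branch instead leans on the fifth lemma, and you correctly identified its failure mode --- a guilty party who knows some specific suspect is innocent truthfully answers ``no'' to ``Is it possible that everyone committed the crime?'' --- but your proposed repair does not close it. Running the Lemma~\ref{thm:3} interrogation first and restricting to the remaining suspects removes all knowledge of other people's \emph{guilt}, but it does nothing to knowledge of other people's \emph{innocence}: knowing your neighbor was asleep all night makes you a witness to no crime, so the witness round never touches that information. Hence the residual sub-crime need not satisfy the fifth lemma's hypothesis that ``no one has any information about anyone else,'' and its question can still misclassify a hidden criminal as innocent --- the reduction you call ``the one step that requires care'' actually fails for this branch. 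Your known-$m$ branch via the fourth lemma is robust (a guilty person knows any complete list of culprits must include them, and an innocent person cannot consistently rule out every candidate set), so the repair is to replace the fifth lemma's question in the unknown-count branch with the paper's ``someone else'' question, or with the list-based formulation in the proof of Theorem~\ref{thm:l}, which the paper explicitly notes works whether or not the number of criminals is known.
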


\begin{proof}
First the detective asks everyone about everyone else. This way she finds all the criminals who failed to keep their crimes hidden from other people. There may still be undiscovered criminals who no one but themselves knows about. Then she asks everyone who has not yet been identified as a criminal, ``Did someone else commit the crime?'' If the answer is `no' or `I don't know,' the person is innocent. If the answer is `yes,' the person is guilty.
\end{proof}

The next puzzle shows how to use partial information about the crime to create a misdirected question.

\textbf{Leon's Puzzle}

\begin{quote}
On Truth-Tellers Island, Jolly Ranchers are worth 1,000,000 gold coins. Ezra previously collected Jolly Ranchers secretly in a safe. One day, Ezra realized his blue Jolly Rancher was stolen from his safe, while the other colors were not touched. When the police arrived, he accused his five neighbors, whom the police immediately seized for questioning. What one question can Detective Khovanova ask in order to determine who is innocent
and who is guilty, assuming that the thief has not confided in anyone else?
\end{quote}

\textbf{Leon's Solution.} The question is, ``What color was the stolen Jolly Rancher?''

In this puzzle the detective uses the information that only the criminal could know, in order to figure out who committed the crime. For every crime, there is always such information, other than peoples' guilt, to differentiate between the innocent and the criminal. For example, in one of the previous cases, when people stole money they found lying in the street, the detective could have asked:
\begin{itemize}
\item How much money did you take?
\item Were there \$100 bills?
\end{itemize}

Moreover, the detective can ask people what they did during the crime, instead of asking about the suspect's guilt. For example:

\begin{itemize}
\item Did you hold a knife in your hands at noon yesterday?
\item How much sleep did you get last night?
\end{itemize}

This provides another way of solving crimes on Truth-Tellers Island.

If partial truth-tellers are smart and do not want to get caught, they need to expand the number of questions they lie about. But if our mathematical model allows for more than one lie, the model gets more complicated. We started this research by thinking that absolute Truth-Tellers are too far from reality: normal people lie from time to time. In order to reflect real life more closely, we decided to introduce a simple variation from absolute Truth-Tellers, allowing them to lie to one question and only in one particular situation. We just discovered that the detective can by-pass this particular question by using alternative questions, of which there are a whole variety.

Our discussion also shows that in real life when people start lying they are forced to continue lying to hide their participation in the crime. One lie leads to a chain of lies.

To make the rest of our puzzles fun, let's introduce another assumption for the rest of the paper: The detective cannot ask questions about the colors of candy or the types of bills. She's only allowed to ask about participation in the crime.

Andrew invented a more convoluted puzzle, which highlights an issue we have yet to address. In his puzzle he used compound sentences. How do the islanders treat compound sentences where one of the compounds is ``I am not guilty''? The partial truth-tellers who are not guilty as well as the absolute truth-tellers always tell the truth anyway. So their compound sentences are also always true. The partial truth-tellers who are guilty only lie about their guilt. That means, they treat the sentence ``I am not guilty'' as the truth. If the statement of their guilt is a part of a compound sentence, we assume that the whole sentence is true on the condition that they are not guilty.

\textbf{Andrew's Puzzle}

\begin{quote}
A gold star was stolen from the bank. Detective Khovanova knows that all her suspects are partial truth-tellers. All the perpetrators are among the three suspects. These are their statements:

Jakob: ``If Essra did it, then he wasn't alone.'' \\
Essra: ``I did it or Jakob didn't do it.'' \\
Johnatan: ``If I didn't do it, then Essra did it.''

Who stole the gold star?
\end{quote}

\textbf{Andrew's Solution.}

Partial truth-tellers behave like absolute truth-tellers who are innocent. So Essra's admission of guilt should be considered a lie. But his whole compound sentence must be true. Therefore we can conclude that Jakob didn't do it. Johnatan's premise that he didn't do it must be true. It follows that Essra did it. From Jakob's statement it follows that Essra didn't do it alone; that means Johnatan did it too. The criminals are Essra and Johnatan.

\section{Liars Island}

Off the coast of Truth-Tellers Island, there is a smaller island, called \textit{Liars Island}, where all the liars are sent to live. Here people almost always lie. Like on Truth-Tellers Island, there are two types of people on Liars Island. People who always lie are called the \textit{Absolute Liars}. Other people lie most of the time. They only tell the truth when they are guilty and asked if they are guilty. We call them the \textit{Responsible Liars}. These two
types of people make up the entire population of this island.

Detective Khovanova was invited to Liars Island to solve its crimes.

\textbf{Jonathan's Puzzle}

\begin{quote}
Someone stole Ben's special snowflake. There are three suspects: Mike, Leon, and Ashwin. No one else could have committed the crime. Here are the statements:

Mike: ``I didn't commit the crime.'' \\
Leon: ``Two people committed the crime.''\\
Ashwin: ``Both Leon and Mike participated in the crime.''

Who committed the crime?
\end{quote}

\textbf{Jonathan's Solution.} Mike says that he didn't commit the crime, which means that he did. Leon's statement is a lie. Therefore, either one person committed the crime---in this case this would have to be Mike, or all three of them did it. Ashwin's statement is a lie. Therefore, all three people couldn't have done it together. The crime was committed by Mike.

Can we transfer the lemmas and theorems from Truth-Tellers Island to Liars Island by symmetry? It might be tempting to think so, but the answer is ``no.'' We can only transfer statements where the questions are of the ``yes-or-no'' type. Such questions give us the same information. Open-ended questions might not work. For example, when we ask a liar what kind of bills were there at the crime scene, if the liar didn't see the bills s/he can say any number. 

Lemma~\ref{thm:1} uses only yes-or-no questions in the proof, so it can be extended to Liars Island by symmetry:

\begin{lemma}
On Liars Island if someone says they are not guilty, then they are guilty. In addition, statements about other people are false.
\end{lemma}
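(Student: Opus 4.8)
The plan is to establish both claims by a case analysis on the two inhabitant types of Liars Island, mirroring the argument behind Lemma~\ref{thm:1} under the truth/lie symmetry. Throughout, the only questions involved are of yes-or-no type (``Are you guilty?'' and ``Did person $X$ do it?''), so the transfer from Truth-Tellers Island applies cleanly and no open-ended answers can interfere.

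For the first claim, I would show that the statement ``I am not guilty'' can only be uttered by an Absolute Liar. Consider a Responsible Liar who is asked about their own guilt. If they are guilty, the defining exception applies and they tell the truth, so they say ``I am guilty.'' If they are innocent, the exception does not apply and they lie, so from the true statement ``I am not guilty'' they produce its negation, again saying ``I am guilty.'' Hence in both branches a Responsible Liar says ``I am guilty'' and never ``I am not guilty.'' Therefore anyone who does say ``I am not guilty'' must be an Absolute Liar; since Absolute Liars always lie, the statement is false and the speaker is in fact guilty.

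For the second claim, I would observe that the Responsible Liar's truth-telling exception is triggered only by a question about their own guilt. A statement about another suspect falls outside this exception, so both Absolute and Responsible Liars lie when speaking about others. Consequently every statement an islander makes about a different person is false, exactly as required.

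The main delicate point will be the bookkeeping in the Responsible Liar case: one must verify that both the guilty branch (truth-telling, forced by the exception) and the innocent branch (lying) collapse to the same utterance ``I am guilty,'' so that ``I am not guilty'' is never produced by this type at all. Once that collapse is checked, the reduction to Absolute Liars is immediate and the rest follows verbatim from the Truth-Tellers analysis by symmetry.
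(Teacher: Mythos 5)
Your proposal is correct and takes essentially the same approach as the paper, which states this lemma as a transfer of Lemma~\ref{thm:1} by the truth/lie symmetry after noting that only yes-or-no questions are involved. Your explicit case analysis of the Responsible Liar---verifying that both the guilty branch (forced truth-telling) and the innocent branch (lying) produce ``I am guilty,'' so that only an Absolute Liar can ever deny guilt---merely spells out the reasoning the paper leaves implicit, and it is sound.
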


Similarly we can combine Lemma~\ref{thm:2} and Lemma~\ref{thm:3} into the following statement:

\begin{lemma}
On Liars Island if for every criminal there is at least one other person who knows that this criminal committed the crime, then the crime can be solved.
\end{lemma}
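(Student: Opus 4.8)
The plan is to mirror the Truth-Tellers argument behind Lemma~\ref{thm:2} and Lemma~\ref{thm:3}, replacing ``tells the truth about others'' with ``lies about others'' and then inverting every answer. The key tool is the fact just established for Liars Island: any statement a person makes about another suspect is false. Since the question ``Is $X$ guilty?'' is of the yes-or-no type, the symmetry that the paper licensed for such questions applies, so this transfer is legitimate.

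First I would have the detective interrogate each suspect $P$ about the guilt of every other suspect $X$, recording only the definite ``yes''/``no'' answers. Second, I would invoke the established lemma to argue that every such definite answer is the opposite of the truth, so the detective recovers the correct status of $X$ by inverting each answer. Third, I would use the hypothesis: for every criminal $X$ there is a person who knows $X$ is guilty, and that person---being a liar about others---is forced to answer ``$X$ is not guilty,'' which inverts to the correct verdict ``guilty.'' Hence every criminal is flagged.

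The step I expect to require the most care is ruling out misclassification and checking the self-consistency of the inversion. The point to verify is that no innocent suspect $Y$ can be inverted into a false ``guilty'': a definite answer that inverts to ``$Y$ is guilty'' would have to be the statement ``$Y$ is not guilty,'' but for an innocent $Y$ that statement is \emph{true}, and liars never make true statements about others, so it is never uttered. Symmetrically, the only definite answers given about $Y$ are the lies of $Y$'s knowers, all of which invert to the correct ``innocent.'' I would then close the argument with the default rule that any suspect about whom no one gives a definite answer must be innocent, since a criminal, by hypothesis, always has at least one knower who would be compelled to respond. A convenient feature of this route is that it never relies on parsing ``I don't know'' answers, which sidesteps the delicate question of how a liar handles genuine ignorance.
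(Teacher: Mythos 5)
Your proposal is correct and takes essentially the same route as the paper, which proves this lemma simply by transferring the Truth-Tellers argument for Lemmas~\ref{thm:2} and~\ref{thm:3} by symmetry: ask each suspect about the guilt of every other suspect and invert the answers, since liars' statements about other people are false, so each criminal is flagged by his knower. Your extra care in handling ignorant respondents (recording only definite answers and defaulting unflagged suspects to innocent) is a refinement the paper leaves implicit rather than a genuinely different argument.
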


Let us move on to the case where no one knows who committed the crime. Consider Neil's puzzle where exactly one person stole the mayor's favorite cuff-links and no one knows who.

\textbf{Solution to Neil's puzzle on Liars Island.} On Truth-Tellers Island we suggested a question for Detective Khovanova, ``Did I do it?'' This question worked there. What happens if we use the same question on Liars Island? A guilty person would know that the detective didn't do it, so s/he would say ``Yes.'' An innocent person doesn't know. What would s/he say? We are in a tricky territory that few logic books discuss. It is better to stick to yes-or-no questions. In Neil's puzzle, the question might be, ``Is it possible that I committed the crime?''

We can adjust the proof of the main theorem for Truth-Tellers Island (Theorem~\ref{thm:tt}) to make it work with only yes-or-no questions:

\begin{theorem}\label{thm:l}
On Liars Island it is always possible to solve any crime.
\end{theorem}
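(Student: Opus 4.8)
The plan is to mirror the two-phase strategy behind Theorem~\ref{thm:tt}, but to phrase every query as a yes-or-no question and to replace each ``true answer'' by its negation whenever the detective asks one islander about another. Recall from the Liars Island analogue of Lemma~\ref{thm:1} that any statement an islander makes about \emph{another} person is false; hence if the detective asks A whether B is guilty and A actually knows B's status, then A's reply is the exact negation of the truth, and the detective simply flips it. So in Phase~1 I would have the detective ask each islander, for every other suspect, ``Is this person guilty?'' Flipping the replies of those who possess the relevant knowledge, she recovers the true status of every suspect about whom \emph{someone else} has information. In particular, exactly as in the Liars Island statement mirroring Lemmas~\ref{thm:2} and~\ref{thm:3}, every criminal witnessed by at least one other person is thereby identified, and this phase uses nothing beyond the robust fact that liars misreport others.

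What remains is the pool of suspects about whom nobody else knows anything: the innocents together with the ``hidden'' criminals whose guilt is known only to themselves. For these I would run Phase~2, the counterpart of the Truth-Tellers question ``Did someone else commit the crime?'' Because that question admitted the reply ``I don't know,'' it must be recast as a genuine yes-or-no question, in the spirit of the reworked Neil puzzle (``Is it possible that I committed the crime?''). Concretely, I would ask each remaining suspect the modal question ``Is it possible that you did not commit the crime?'' An innocent person knows the honest answer is ``yes'' (indeed it is certain), whereas a guilty person, knowing full well what they did, must honestly answer ``no.'' The true answer therefore depends only on the respondent's own guilt and not on whatever they happen to know about anyone else, so it is \emph{determinate} for everyone. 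Treating this as an indirect question --- which the islanders, by assumption, do not unravel --- every liar negates it, so the innocent say ``no'' and the guilty say ``yes,'' and the detective reads off the hidden criminals.

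The step I expect to be the main obstacle is justifying the status of the Phase~2 question for \emph{responsible} liars, who tell the truth in exactly one situation: when they are guilty and are asked whether they are guilty. I must argue that ``Is it possible that you did not commit the crime?'' counts as an \emph{indirect} question rather than one equivalent to ``Are you guilty?''; otherwise a guilty responsible liar would answer it honestly and become indistinguishable from an innocent islander. This is precisely the modal rephrasing the paper has leaned on in the Neil and parade puzzles, so the standing convention that islanders are ``not smart enough to analyze the implications'' should cover it, but it is the one point that genuinely needs care. Notably, since this question resolves \emph{every} suspect on its own, Phase~1 is in principle redundant; I retain it deliberately so that the delicate modal convention is invoked only for the residual hidden criminals, while the bulk of the crime is solved with the unambiguous ``liars misreport others'' rule. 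A secondary detail to pin down is that in Phase~1 an islander with \emph{no} information about B cannot answer ``Is B guilty?'' determinately, so the detective must flip only the replies of genuine witnesses; this can be arranged by first asking each islander ``Do you know whether B is guilty?'' --- whose answer the respondent certainly knows, hence lies about predictably --- and flipping it to locate the witnesses before trusting their testimony.
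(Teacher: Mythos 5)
Your Phase~1 is sound (and your pre-screening question ``Do you know whether B is guilty?'' is a sensible patch for the ignorance problem that the paper itself flags as ``tricky territory''), but your Phase~2 question breaks down at exactly the point you identified, and the hedge you offer does not cover it. The question ``Is it possible that you did not commit the crime?'' is addressed to the suspect about the suspect's own guilt; since everyone has certain knowledge of their own actions, the modal wrapper is vacuous for self-directed questions, and the question collapses to the negation of ``Did you commit the crime?'' --- which the paper explicitly counts among the questions \emph{equivalent} to ``Are you guilty?'' The ``not smart enough to analyze implications'' convention does not rescue you: that convention governs \emph{indirect} questions (colors of candy, knives at noon, other people's deeds) whose bearing on guilt requires inference; it does not reclassify a direct, if modalized, guilt question as indirect. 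If anything, an unsophisticated islander treats your question as the guilt question itself. Consequently a guilty responsible liar triggers the truth-telling exception and answers ``no'' --- exactly what a lying innocent answers --- so every hidden criminal who happens to be a responsible liar is indistinguishable from an innocent, and Phase~1 cannot catch them by hypothesis. Note also that the precedent you cite, ``Is it possible that I committed the crime?'', is a question about the \emph{detective}, not a modal self-question, so it does not support your construction.

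The paper's proof closes this hole by making the decisive question be about \emph{other people}: the detective asks, ``I wrote down a random list of people not including you. Is it possible that they are the only ones who committed the crime?'' A guilty suspect knows the true answer is ``no'' (they themselves are a criminal absent from the list) and lies ``yes''; an innocent suspect's true answer is ``yes'' and they lie ``no.'' Because the question never mentions the respondent's own guilt, the responsible-liar exception can never fire, so all liars lie uniformly and the guilty are cleanly separated from the innocent, whether or not the number of criminals is known. Replacing your Phase~2 question with this one repairs your argument; the rest of your structure then matches the paper's.
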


\begin{proof}
We just need to convert the question ``Did someone else commit the crime?'' which we used on Truth-Tellers Island into a yes-or-no question. For example, the detective can ask a person ``I wrote down a random list of people not including you. Is it possible that they are the only ones who committed the crime?'' This question can be used when the number of criminals is known and also when it is unknown. If the person under questioning is guilty, the answer is ``yes.'' If the person is not guilty, the answer is ``no.''
\end{proof}

Here is a complicated puzzle related to a crime on Liars Island:

\textbf{Ezra's Puzzle} 

\begin{quote}
One day a number of government files were stolen. Detective Khovanova reduced the pool of the suspects to three people: Andrew, Neil, and Ben. When Detective Khovanova interviews the three people she gets the following responses.

Andrew: ``I didn't commit the crime. Ben will lie when questioned about his guilt.''\\
Ben: ``I committed the crime. Neil was also involved.''\\
Neil: ``I am totally innocent. My statement isn't necessary to solve the case. It was Ben who committed the crime by himself.''

Who committed the crime?
\end{quote}

\textbf{Ezra's Solution.}
Andrew's second statement must be a lie. This means that Ben is a responsible liar and tells the truth when questioned about his guilt, if he is actually guilty. When Ben says that he committed the crime he is telling the truth. His second statement must be a lie, so Neil was not involved. Since Neil says that Ben committed the crime by himself, the opposite must be true. Since Ben didn't commit the crime by himself, he must have been working with Andrew. Therefore, it was Andrew and Ben who committed the crime.

\section{Mixing People}

People from both islands went to the mainland to celebrate New Year's together. Unfortunately, crimes happen even during festive times.

\textbf{Ben's Puzzle}

\begin{quote}
Someone has dug a hole through the Trump Wall and stolen the holy potato of Joe Biden. Detective Khovanova must solve this case to get a beef burrito offered by the great and almighty Lordship. There are four suspects, one of each type. Exactly two of them did it. Who dug a hole through the Trump Wall and what is the type of each person?

Neil: ``I'm a partial truth-teller.  Mike is not.''\\
Mike: ``I'm a partial truth-teller.  Neil is not.  He is a scrub though.''\\
Nastia: ``I'm a banana.  Neil is telling the truth.'' \\
Leon: ``I like potatoes.  Neil is an absolute liar.''\\
Mike: ``I have no idea who did it.''\\
Neil: ``I didn't do it. And Leon didn't do it either.''\\
Nastia: ``I did it.  Leon did not.''\\
Leon: ``I did it.''
\end{quote}

\textbf{Ben's Solution.} Nastia is lying because she cannot be a banana. So Neil is lying as well because Nastia calls Neil a truth-teller.  So Nastia and Neil are liars and Mike and Leon are truth-tellers.  Leon must have done it since he is a truth-teller and he says ``I did it.''  It also follows that Leon is an absolute truth-teller and Mike is a partial truth-teller. We know that Neil is an absolute liar because Leon, a truth-teller, says so. Neil's ``I didn't do it'' is a lie, so Neil and Leon did it together. By elimination Nastia is a responsible liar.

To solve any crime conducted by people from both islands Detective Khovanova just needs to separate truth-tellers from liars. It is easy to do. She can ask any question she knows the answer to. For example, she can ask how much is two plus two, or what color is the sun. Therefore, we can extend our main theorems:

\begin{theorem}\label{thm:main}
If a crime was committed by people from both of these islands it is always possible to solve it.
\end{theorem}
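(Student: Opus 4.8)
The plan is to reduce the mixed-population case to the two single-island cases already established in Theorem~\ref{thm:tt} and Theorem~\ref{thm:l}. The first step, as the preceding discussion suggests, is to have the detective determine each suspect's island of origin. She asks every suspect a question whose answer she already knows and which has nothing to do with the crime---say, ``Is two plus two equal to four?'' An islander from Truth-Tellers Island answers ``yes'' and an islander from Liars Island answers ``no,'' because this is an indirect question unrelated to anyone's guilt, so the guilt exception never applies and each person follows the default rule of their island. After this single round the detective has partitioned the suspects into a truth-telling group and a lying group, and she knows to which group each person belongs.

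The second step is to observe that, once a person's type is known, any yes-or-no question that is not equivalent to ``Are you guilty?'' yields usable information from everyone. A truth-teller answers such an indirect question honestly and a liar answers it dishonestly, so the detective can pose one fixed yes-or-no question to the entire population and then simply invert the answers of the known liars. The inverted answers are exactly what the same population would have produced had every member been a truth-teller. In this way the mixed interrogation is transformed, question by question, into an interrogation of a purely truth-telling island.

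Having made this reduction, the detective runs the protocol of Theorem~\ref{thm:tt}, using only the yes-or-no formulations established in the proof of Theorem~\ref{thm:l} so that the inversion step is always well defined. Concretely, she first asks everyone about everyone else, flipping the liars' responses, to identify every criminal whose guilt is known to some other suspect; she then asks each still-unidentified suspect the indirect yes-or-no question ``I wrote down a list of people not including you---is it possible that they are exactly the set of criminals?'', again flipping the liars' answers, to expose the criminals whom no one else could name. Since both questions are indirect, all four person-types respond according to their island's default rule, and the flip is valid.

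The step I expect to be the main obstacle is justifying that the crime-solving questions are genuinely indirect, so that the guilt exception of partial truth-tellers and responsible liars never interferes with the inversion. This is exactly the delicate point flagged earlier in the paper: a question such as ``could this list be the complete set of criminals?'' does imply something about the respondent's own guilt, yet the islanders are assumed not to analyze such implications and therefore answer by their default truth-or-lie rule. Granting that assumption, every question used is answered strictly by type, the inversion reduces the mixed case to Theorem~\ref{thm:tt}, and the crime is solved.
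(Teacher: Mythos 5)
Your proposal is correct and follows essentially the same route as the paper: the paper's argument is precisely to first separate truth-tellers from liars by asking a question with a known answer (such as ``how much is two plus two''), and then to invoke the single-island results, Theorem~\ref{thm:tt} and Theorem~\ref{thm:l}. Your additional care---restricting to yes-or-no questions so liars' answers can be inverted, and noting that the crime-solving questions must count as indirect under the paper's standing assumption---only makes explicit details the paper leaves implicit.
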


Let's try another puzzle.

\textbf{Mike's Puzzle}

\begin{quote}
One day, four people are taken in as suspects for stealing an ancient scroll from a museum; two of them are truth-tellers and two are liars. Their names are Andrew, Ben, Jacob, and Jonathan. Detective Khovanova interrogates them. The detective also knows that only one person stole the scroll and that he is the only absolute liar among the suspects. In separate rooms, she asks each of the suspects, ``Did you steal the ancient scroll?''

Andrew says: ``It was not me. It was Ben.''\\
Ben says: ``I did not do it.''\\
Jacob says: ``Ben or Jonathan is an absolute liar.''\\
Jonathan says: ``It was Ben. Jacob is a liar.''

Is it possible to figure out who stole the scroll? If so, who was it? 
\end{quote}

\textbf{Mike's Solution.} Suppose Ben committed the crime. Then we have three people, Andrew, Jacob, and Jonathan making true statements not about their own guilt. This is a contradiction. This means Ben is innocent. Therefore, Andrew and Jonathan must be liars. In addition, Jacob is a truth-teller and, thus, either Ben or Jonathan is an absolute liar who committed the crime. We know it is not Ben. Therefore, the crime was committed by Jonathan.

Nastia's puzzle mixes people but restricts the number of types of people to two. 

\textbf{Nastia's puzzle.}

\begin{quote}
The Rube Cube Club only lets in two types of people, but we do not know which types. When a golden cue stick is stolen from the Club, Detective Khovanova figures it was exactly one of three members of the club: Neil, Leon, or Ben. Detective Khovanova politely listens as the three suspects talk to her.

Neil: ``I am guilty. Leon is not guilty.''\\
Leon: ``I am guilty. Ben is not guilty. Neil is from Liars Island.''\\
Ben: ``I am guilty. Neil is an absolute liar.''\\
Neil: ``Ben is from Truth-Tellers Island.''

Who is guilty, and what are the two types of people who are allowed in the Rube Cube?
\end{quote}

\textbf{Nastia's Solution.} If we assume that Leon is a liar, then Neil is a truth-teller. 
According to Neil's last statement, Ben is a truth-teller too. But Ben says that Neil is a liar. This is a contradiction. 

Therefore, Leon is a truth-teller. Then according to Leon's statement, Ben is not guilty, and Neil is a liar. Thus, according to Neil's first statement, Leon is guilty. Also, because Neil lied in his second statement, Ben is a liar. As Leon admitted his guilt, he is an absolute truth-teller. Finally, as Ben's statement identifying Neil as an absolute liar is a lie and Neil is from Liars Island, Neil must be a partial liar, and therefore Ben is also a partial liar.

Leon committed the crime and the two types of people that are allowed in Rube Cube are absolute truth-tellers and partial liars.

Similar to Nastia's puzzle there are only two types of people in Will's puzzle.

\textbf{Will's puzzle.}

\begin{quote}
One night at a party, where partial truth-tellers and responsible liars came together, Johnathan's pet dinosaur was stolen. There were three suspects: Andrew, Ezra, and Jacob. Later when they were questioned, here was the conversation:

Andrew: ``Ezra did it. Ezra did it alone.''\\
Ezra: ``I stole Johnathan's pet dinosaur. Jacob is not guilty.''\\
Jacob: ``Exactly two people committed the crime. I had nothing to do with this case.''

Who stole Johnathan's pet dinosaur? And who is who?
\end{quote}

\textbf{Will's Solution.} Right off the bat, we see that Ezra is a responsible liar and Jacob is a partial truth-teller. This is because responsible liars say that they are guilty independently of whether they are guilty or not and partial truth-tellers always say that they are innocent. It follows that two people committed the crime and Jacob is one of them. As Andrew accuses only Ezra, we can infer that Andrew is a responsible liar and Ezra is innocent. The pet dinosaur was stolen by Andrew and Jacob.

\section{Acknowledgements}
We would like to thank the PRIMES STEP program for the opportunity to do this research. In addition, we are grateful to PRIMES STEP Director, Dr.~Slava Gerovitch, for his help and support.

\end{document}